\newtheorem{theorem}{Theorem}
\newtheorem{corollary}{Corollary}
\newtheorem{proposition}{Proposition}
\begin{document}
\thispagestyle{empty}
\sloppy

\title[On Binomial coefficients of real arguments]
{On Binomial coefficients\\of real arguments}

\let\MakeUppercase\relax 
\author{\rm\Large Tatiana~I.~Fedoryaeva}
\address{\it Sobolev Institute of Mathematics, Novosibirsk, Russia\\
E-mail{\rm:} fti@math.nsc.ru }
\thanks{\sc Fedoryaeva,~T.I., On binomial coefficients of real arguments}
\thanks{\rm The work was carried out within the framework of the state contract of
the Sobolev Institute of Mathematics (project no. FWNF-2022-0018)}
\thanks{\it Submitted to arXiv June, 7, 2022}

\vspace{1cm}
\maketitle
{\small
\begin{quote}
\noindent{\sc\bf Abstract.} As is well-known, a generalization of
the classical concept of the factorial $n!$ for a real number $x\in
{\mathbb R}$ is the value of Euler's gamma function $\Gamma(1+x)$.
In this connection, the notion of a binomial coefficient naturally
arose for admissible values of the real arguments.

By elementary means, it is proved  a number of properties of
binomial coefficients $\binom{r}{\alpha}$ of real arguments
$r,\,\alpha\in {\mathbb R}$\, such as analogs of unimodality,
symmetry, Pascal's triangle, etc. for classical binomial
coefficients. The asymptotic behavior of such generalized binomial
coefficients of a special form is established.

\medskip

\noindent{\bf Keywords:} factorial, binomial coefficient, gamma
function, real binomial coefficient.

\medskip

\noindent{\bf MSC 2020:} 05A10, 11B65
\end{quote}
}
\vspace{1cm}

\section*{Introduction}
\hspace*{\parindent} We study binomial coefficients of real
arguments. The aim of the investigation is to obtain by elementary
methods analogs of the basic properties well known for the classic
binomial coefficients. Such properties are of independent interest
and, in addition, can simplify the work with binomial coefficients
of the form $\binom{n}{m}$ with integer non-negative arguments $n$
and $m$, given essentially by real values with considered rounding
to an integer (when, for example, floor and ceiling functions for a
real number are used, etc.). So, for example, the properties of
unimodality and symmetry allow passing from such binomial
coefficients $\binom{n}{m}$, $0\leq m \leq n$ to \/"close"\/ real
binomial coefficients of the form $\binom{r}{\alpha}$, $\alpha\in
(-1,r+1)$ and vice versa. This approach simplifies the evaluation of
expressions with discrete binomial coefficients with integer
arguments of the specified form.

Note that the binomial coefficients of the form $\binom{r}{n}$,
where $r\in {\mathbb R}$ and $n\in {\mathbb N}$, can be defined in
the standard way as
$$\binom{r}{n}=\frac{r(r-1)(r-2)\cdots (r-n+1)}{n!}.$$
\noindent This approach was discussed in [4], where a numerous
number of identities for such binomial coefficients is given.  In
[3], D.~Fowler studied the graph of the function $\binom{r}{\alpha}$
of two real variables $r$ and $\alpha$, various slices of this graph
were constructed using a computer and their analysis was carried
out. It is also indicated there an explicit expression for the
binomial coefficient $\binom{n}{\alpha}$, where $n$ is a
non-negative integer, through elementary functions (see\/
Proposition $2$ in Section $2$). On the basis of this
representation, Stuart~T.~Smith investigated the binomial
coefficients of the form $\binom{n}{z}$ with complex variable $z\in
{\mathbb C}$ and fixed natural number $n\in {\mathbb N}$, a number
of properties of such a function of complex argument $z$ is
established in [6]. In particular, the derivatives of the first and
second orders are calculated, and for the real argument $z$,
increasing and decreasing intervals, zeros of the function, etc are
found. It is also noted there the nontriviality of the function
investigation $\binom{n}{\alpha}$ of real variable $\alpha$ exactly
on the interval $\alpha\in (-1,n+1)$, in contrast to the domain
outside this interval. In particular, the increasing and decreasing
of this function was established rather difficult.

In this paper, we prove by elementary means a number of properties
of the binomial coefficients $\binom{r}{\alpha}$ of real arguments
$r,\,\alpha\in {\mathbb R}$, $\alpha\in (-1,r+ 1)$ (analogs of the
properties of unimodality, symmetry, Pascal's triangle, etc. for
discrete binomial coefficients), which may be useful in further
research (see, for example, [1]).

\section{Preliminary information}
\hspace*{\parindent} The article uses the generally accepted
concepts and notation of real analysis [2], as well as the standard
concepts of combinatorial analysis [4]. Denote by $(a,b)$ the {\it
open real interval}\/ between the numbers $a,b\in {\mathbb R}$,
$o(1)$ is an {\it infinitesimal function} in a neighborhood of
$\infty$, $n!$ is the {\it factorial} of non-negative integer $n$,
i.e. $n!=n(n-1)\cdots 2\cdot 1$, and wherein we define $0!=1$,
$\binom{n}{m}$, where $0\leq m \leq n$, is the (standard) binomial
coefficient (with non-negative integer arguments $n$, $m$ ), i.e.
$$\binom{n}{m}=\frac{n!}{m!(n-m)!}.$$
To denote {\it asymptotic equality} of real-valued functions $f(x)$
and $g(x)$ as $x\rightarrow\infty$, we use the notation $f(x)\sim
g(x)$, which by definition means that $f(x)=g(x)(1+r(x))$ in some
neighborhood of $\infty$, where $r(x)=o(1)$, or, equivalently (for
functions positive in some neighborhood of $\infty$)
$$\lim_{x\rightarrow\infty}\frac{f(x)}{g(x)}=1.$$

The standard approach is considered, according to which the concept
of factorial for non-negative integers extends to real (and even
complex) numbers by the {\it gamma function} $\Gamma(\alpha)$. We
will use its definition in the following {\it Euler-Gauss form}
[2, p. 393--394, 812]

\begin{equation}
\Gamma(\alpha)=\lim_{n\rightarrow\infty} \frac{(n-1)!\,n^
{\alpha}}{\alpha(\alpha +1)(\alpha +2)\cdots(\alpha
+n-1)},\,\alpha\in{\mathbb R}\setminus \{0,-1,-2,\ldots\},
\end{equation}
such a limit exists for any specified value of $\alpha$ (see, for
example, [5] or [2, p. 393]). In view of the problem statement, we
do not consider extensions of the gamma function outside its
standard domain of definition. Note that when defining the gamma
function in the form of the {\it Euler integral of the second kind}
$$\Gamma(\alpha)=\int_0^{\infty} x^{\alpha-1} e^{-x} dx,$$
converging for $\alpha >0$, we obtain an equivalent definition on
the interval $(0,\infty)$ [2, p. 811]. The gamma function
$\Gamma(\alpha)$ is continuous and has continuous derivatives of all
orders on $(0,\infty)$, has no real roots, and it is positive on
$(0,\infty)$. For any non-negative integer $n$ the following
equality holds
\begin{eqnarray}
\Gamma(1+n)=n!\,,
\end{eqnarray}
moreover, the next {\it reduction formula} is valid [2, p. 394]
\begin{eqnarray}
\Gamma(1+\alpha)=\alpha\Gamma(\alpha),\, \alpha\in {\mathbb
R}\setminus \{0,-1,-2,\ldots\}.
\end{eqnarray}
In addition, the series expansion of $\sin \pi \alpha$ and (1) imply
the following {\it reflection formula} (see, for example, [2, 5])
\begin{eqnarray}
\Gamma(\alpha)\,\Gamma(1-\alpha)=\frac{\pi}{\sin\pi\alpha}\,,\,
\alpha\in {\mathbb R}\setminus \{0,\pm1,\pm2,\ldots\}.
\end{eqnarray}

As a generalization of the discrete binomial coefficient
$\binom{n}{m}$, the binomial coefficient of real arguments is
defined as follows (see, for example, [3])
\begin{eqnarray}
\binom{r}{\alpha}=\frac{\Gamma(1+r)}{\Gamma(1+\alpha)\Gamma(1+r-\alpha)}\,.
\end{eqnarray}
Note that if $r\in(-1,+\infty)$ and $\alpha\in (-1,r+1)$, the
binomial coefficient $\binom{r}{\alpha}$ is defined correctly by the
equality (5).

\section{Binomial coefficients $\binom{r}{\alpha}$ for $r,\,\alpha\in {\mathbb R}$}
\begin{theorem}[properties of the binomial coefficient of real arguments]
Let  $r\in(-1,+\infty)$ and $\alpha\in (-1,r+1)$. Then \\[-6pt]

{\rm (i)} $\binom{r}{\alpha}>0$, $\binom{r}{0}=1$ and $\binom{r}{r}=1;$\\[-3pt]

{\rm (ii)} $\binom{0}{\alpha}=\left\{
\begin{array}{ll}
\,1, & \mbox{if } \alpha=0,\\
\frac{\sin\pi\alpha}{\pi\alpha}\,, & \mbox{if } \alpha\neq 0 \mbox{
and } \alpha\in (-1,1);
\end{array} \right.$\\[1pt]

{\rm (iii)} $\binom{r}{r-\alpha}=\binom{r}{\alpha};$\\[-3pt]

{\rm (iv)} $\binom{r}{\alpha}=\binom{r-1}{\alpha -1} + \binom{r-1}{\alpha},$
if $r\in(0,+\infty)$ and $\alpha\in (0,r);$\\[-3pt]

{\rm (v)} binomial coefficient $\phi(\alpha)\!=\binom{r}{\alpha}$ is
strictly increasing on the interval \,$(-1,\frac{r}{2}]$ and
strictly decreasing on the interval \,$[\frac{r}{2},r+1);$\\[-3pt]

{\rm (vi)} binomial coefficient $\psi(r)\!=\binom{r}{\alpha}$ is
strictly increasing for $\alpha >0$, strictly decreasing for
$-1<\alpha <0$ and $\psi(r)\!\equiv1$ if $\alpha=0$.
\end{theorem}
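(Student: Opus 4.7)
The plan is to dispatch parts (i)--(iv) by direct manipulation of the defining formula (5) together with (2)--(4), and then to reduce the two monotonicity claims (v)--(vi) to a single analytic fact: the strict log-convexity of $\Gamma$ on $(0,\infty)$.

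For (i), the hypotheses $r>-1$ and $-1<\alpha<r+1$ place each of $1+r$, $1+\alpha$, $1+r-\alpha$ in $(0,\infty)$, where $\Gamma$ is strictly positive; the identities $\binom{r}{0}=\binom{r}{r}=1$ then reduce to $\Gamma(1)=0!=1$ via (2). For (ii), the case $\alpha=0$ is contained in (i); for $\alpha\in(-1,1)\setminus\{0\}$, I would combine (3) and (4) to write $\binom{0}{\alpha}=\frac{1}{\Gamma(1+\alpha)\Gamma(1-\alpha)}=\frac{1}{\alpha\,\Gamma(\alpha)\Gamma(1-\alpha)}=\frac{\sin\pi\alpha}{\pi\alpha}$. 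Part (iii) is immediate from the symmetry of the right-hand side of (5) under $\alpha\leftrightarrow r-\alpha$. For the Pascal identity (iv), I would bring $\binom{r-1}{\alpha-1}+\binom{r-1}{\alpha}$ to the common denominator $\Gamma(1+\alpha)\Gamma(1+r-\alpha)$ by applying (3) in the forms $\Gamma(1+\alpha)=\alpha\Gamma(\alpha)$ and $\Gamma(1+r-\alpha)=(r-\alpha)\Gamma(r-\alpha)$; the numerator then collapses to $r\,\Gamma(r)=\Gamma(1+r)$, giving $\binom{r}{\alpha}$.

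For the unimodality (v), by (iii) the function $\phi(\alpha)$ is symmetric about $r/2$, so strict unimodality reduces to strict monotonicity on a single half-interval. I plan to work with the denominator $F(\alpha)=\Gamma(1+\alpha)\Gamma(1+r-\alpha)$ and show that $\log F$ is strictly convex on $(-1,r+1)$; together with $F(\alpha)=F(r-\alpha)$ this forces $F$ to be strictly decreasing on $(-1,r/2]$ and strictly increasing on $[r/2,r+1)$, and the desired behaviour of $\phi=\Gamma(1+r)/F$ follows at once. For (vi), differentiating $\log\psi(r)=\log\Gamma(1+r)-\log\Gamma(1+\alpha)-\log\Gamma(1+r-\alpha)$ yields
\[
\frac{\psi'(r)}{\psi(r)}=(\log\Gamma)'(1+r)-(\log\Gamma)'(1+r-\alpha),
\]
whose sign matches the sign of $\alpha$ by strict monotonicity of $(\log\Gamma)'$; the case $\alpha=0$ collapses (5) to $\psi(r)\equiv 1$.

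The main obstacle is establishing strict log-convexity of $\Gamma$, equivalently strict monotonicity of $\Psi=(\log\Gamma)'$ on $(0,\infty)$, by elementary means consistent with the rest of the paper. Starting from (1), taking logarithms and differentiating term-by-term---which is justified by uniform convergence on compact subsets of $(0,\infty)$---I expect to obtain
\[
\Psi(x)=-\gamma-\frac{1}{x}+\sum_{k=1}^{\infty}\!\left(\frac{1}{k}-\frac{1}{k+x}\right), \qquad \Psi'(x)=\sum_{k=0}^{\infty}\frac{1}{(k+x)^{2}}>0,
\]
which is the required strict positivity of $\Psi'$. An alternative route invokes H\"older's inequality applied to the Euler integral representation of $\Gamma$, yielding strict log-convexity on $(0,\infty)$ directly, with strictness coming from the equality case of H\"older.
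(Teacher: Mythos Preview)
Your treatment of (i)--(iv) matches the paper's essentially verbatim: direct appeal to (2)--(5), with (iv) left as a short computation. The divergence is in (v) and (vi).

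For these, the paper does \emph{not} invoke log-convexity or the digamma function. Instead, for (v) it uses the Euler--Gauss limit (1) to write
\[
\frac{\phi(\alpha)}{\phi(\beta)}=\lim_{n\to\infty}\prod_{i=1}^{n}\delta_i(\alpha,\beta),\qquad \delta_i(\alpha,\beta)=\frac{(\alpha+i)(r-\alpha+i)}{(\beta+i)(r-\beta+i)},
\]
and observes that $\delta_i\ge 1$ iff the parabola value $f(\alpha)=-\alpha^2+r\alpha$ dominates $f(\beta)$; strictness is extracted from the explicit factor $\delta_1=1+\varepsilon(\alpha,\beta)$. For (vi) it similarly rewrites $\Gamma(1+r)/\Gamma(1+r-\alpha)$ as a limit of products $\prod_i\bigl(1-\alpha/(1+r+i)\bigr)$ and compares termwise for $r<r'$. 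This is in keeping with the paper's declared aim of using ``elementary means'': only limits and algebraic inequalities, no derivatives of $\Gamma$.

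Your route---strict convexity of $\log\Gamma$ via $\Psi'(x)=\sum_{k\ge 0}(k+x)^{-2}>0$, then reading off (v) from the symmetric strictly convex $\log F$ and (vi) from the sign of $\Psi(1+r)-\Psi(1+r-\alpha)$---is correct and arguably cleaner once the analytic machinery is in place. It also situates the result in a standard framework (log-convexity underlies the Bohr--Mollerup characterization). The trade-off is that you must justify term-by-term differentiation of the product/series for $\Gamma$, whereas the paper's factor-by-factor comparison needs nothing beyond the existence of the limit (1) already assumed.
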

\begin{proof}
Statement (i) follows from the relations (2), (5).

Prove (ii). If $\alpha =0$, the required equality follows from (i).
Further, we assume that $\alpha\neq 0$. Using the relations
(2)--(5), we obtain
$$\binom{0}{\alpha}=\frac{\Gamma(1)}{\Gamma(1+\alpha)\Gamma(1-\alpha)}=
\frac{1}{\alpha\Gamma(\alpha)\Gamma(1-\alpha)}=\frac{\sin\pi\alpha}{\pi\alpha}\,.$$

Note that if $\alpha\in (-1,r+1)$, then $r-\alpha\in (-1,r+1)$.
Therefore, the binomial coefficient $\binom{r}{r-\alpha}$ is defined
and the required equality from (iii) is satisfied due to (5). It is
also easy to prove (iv) from (3) and (5).

Prove statement (v). Let $\alpha$, $\beta\in (-1,r+1)$. From (1) we
obtain
\begin{equation*}
\Gamma(1+\alpha)\,\Gamma(1+r-\alpha)=\lim_{n\rightarrow\infty}
\frac{(n-1)!\,(n-1)!\,n^{2+r}}{\prod_{i=1}^n \, (\alpha
+i)(r-\alpha+i)}\,.
\end{equation*}
Hence,
\begin{equation}
\frac{\phi(\alpha)}{\phi(\beta)}=
\frac{\Gamma(1+\beta)\,\Gamma(1+r-\beta)}{\Gamma(1+\alpha)\,\Gamma(1+r-\alpha)}=
\lim_{n\rightarrow\infty} \prod_{i=1}^n \,\delta_i(\alpha,\beta),\,
\mbox{ where}
\end{equation}
$$\delta_i(\alpha,\beta)= \frac{(\alpha +i)(r-\alpha+i)}{(\beta
+i)(r-\beta+i)}\,.$$ Note that $\delta_i(\alpha,\beta)>0$ for every
$\alpha$, $\beta\in (-1,r+1)$ and $i=1,\ldots,n$. It is also easy to
prove that
\begin{equation}
\delta_i(\alpha,\beta)\geq 1 \Leftrightarrow f(\alpha)\geq f(\beta),
\end{equation}
where $f(x)=-x^2+xr$ and parabola $f(x)$ is strictly increasing on
$(-\infty,\frac{r}{2}]$ as well as strictly decreasing on $[\frac
{r}{2},+\infty)$. Moreover, it is directly established that

\begin{equation}
\delta_1(\alpha,\beta)=1+\varepsilon(\alpha,\beta), \mbox{ where }
\varepsilon(\alpha,\beta)=\frac{(r-\alpha-\beta)(\alpha-\beta)}{(\beta+1)(r-\beta+1)}.
\end{equation}

Let $-1<\beta <\alpha \leq \frac{r}{2}$. Then $f(\alpha)>f(\beta)$
and $\varepsilon(\alpha,\beta)>0$. By virtue of (7), we have
$\delta_i(\alpha,\beta)\geq 1$ for every $i=1,\ldots,n$. Hence, from
(6) and (8) we obtain
$$\frac{\phi(\alpha)}{\phi(\beta)}\geq
\delta_1(\alpha,\beta)=1+\varepsilon(\alpha,\beta)>1.$$

Similarly, if $\frac{r}{2}\leq \beta <\alpha < r+1$, then
$f(\alpha)<f(\beta)$ and $\varepsilon(\alpha,\beta)<0$. Therefore,
$0<\delta_i(\alpha,\beta)<1$, $i=1,\ldots,n$ and
$$\frac{\phi(\alpha)}{\phi(\beta)}\leq
\delta_1(\alpha,\beta)=1+\varepsilon(\alpha,\beta)<1.$$

Prove statement (vi). In view of statement (i), we can assume that
$\alpha\neq 0$. Let $r<r'$. Note that $1+r+i>0$, $1+r'+i>0$ and
$\alpha/(1+r+i)<1$, $\alpha/(1+r'+i)<1$ for every $i\geq 0$. From
(1) we obtain
$$\frac{\Gamma(1+r)}{\Gamma(1+r-\alpha)}=\Bigl(1-
\frac{\alpha}{1+r}\Bigr)\lim_{n\rightarrow\infty}
n^{\alpha}\prod_{i=1}^{n-1}  \Bigl(1-
\frac{\alpha}{1+r+i}\Bigr)\,.$$ Hence,
$\Gamma(1+r)/\Gamma(1+r-\alpha) < \Gamma(1+r')/\Gamma(1+r'-\alpha)$
for $\alpha >0$ (and the reverse strict inequality holds for $\alpha
<0$). In view of (5), we conclude $\psi(r)<\psi(r')$ (respectively
$\psi(r)> \psi(r')$ for $\alpha <0$).
\end{proof}
\begin{proposition}
Let $r$ takes real values, $\alpha\in {\mathbb R}$ does not depend
on $r$ and $0<\alpha <1$. Then the following asymptotic equality is
valid as $r$ tends to infinity
\begin{equation}
\binom{r}{r\alpha}\sim \sqrt{\frac{1}{2\pi \alpha
(1-\alpha)r}}\,\,\Bigl(\frac{1}{\alpha}\Bigr)^{\alpha
r}\,\Bigl(\frac{1}{1-\alpha}\Bigr)^{(1-\alpha) r}.
\end{equation}
\end{proposition}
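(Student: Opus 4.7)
The plan is to apply Stirling's asymptotic formula $\Gamma(1+x)\sim\sqrt{2\pi x}\,(x/e)^{x}$ (as $x\to\infty$) directly to each of the three gamma factors appearing in the definition (5) of the real binomial coefficient. I would first write
$$\binom{r}{r\alpha}=\frac{\Gamma(1+r)}{\Gamma(1+r\alpha)\,\Gamma(1+r(1-\alpha))},$$
noting that because $0<\alpha<1$ is fixed, all three arguments $r$, $r\alpha$, $r(1-\alpha)$ tend to $+\infty$ as $r\to\infty$, so Stirling's equivalence applies to each factor.

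Next I would substitute the three Stirling equivalents and simplify. The square-root prefactors combine as
$$\frac{\sqrt{2\pi r}}{\sqrt{2\pi r\alpha}\,\sqrt{2\pi r(1-\alpha)}}=\sqrt{\frac{1}{2\pi r\,\alpha(1-\alpha)}},$$
which is precisely the prefactor on the right-hand side of (9). For the exponential/power part, the key algebraic observation is that the exponents add correctly: since $r\alpha+r(1-\alpha)=r$, the factor $e^{-r}$ in the numerator cancels against $e^{-r\alpha}\cdot e^{-r(1-\alpha)}$ in the denominator, and $r^{r}$ cancels against $r^{r\alpha}\cdot r^{r(1-\alpha)}$. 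What remains is exactly
$$\frac{1}{\alpha^{r\alpha}(1-\alpha)^{r(1-\alpha)}}=\Bigl(\frac{1}{\alpha}\Bigr)^{\alpha r}\Bigl(\frac{1}{1-\alpha}\Bigr)^{(1-\alpha)r}.$$
Multiplying the two pieces yields the right-hand side of (9).

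The only point that needs a word of justification is that the three asymptotic equivalences can be composed under multiplication and division; this is immediate from the definition $f\sim g \Leftrightarrow f=g(1+o(1))$ recalled in Section~1, since $(1+o(1))/((1+o(1))(1+o(1)))=1+o(1)$. I do not foresee a real obstacle: the computation is essentially bookkeeping, and the only substantive ingredient is Stirling's formula, which for the author's setup can either be cited from [2] or, if a fully self-contained treatment is desired, derived from the Euler-Gauss representation (1) used earlier in the paper. The cleanest presentation is simply to display the Stirling substitution, then collect the $\sqrt{\cdot}$ factors and the power factors separately as above.
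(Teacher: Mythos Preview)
Your proposal is correct and follows essentially the same approach as the paper: write $\binom{r}{r\alpha}$ via (5), note that $r$, $r\alpha$, and $r(1-\alpha)$ all tend to infinity since $0<\alpha<1$, apply the generalized Stirling formula $\Gamma(1+x)\sim\sqrt{2\pi x}\,(x/e)^x$ to each gamma factor, and simplify. The paper merely says the result follows ``by equivalent transformations,'' whereas you spell out the cancellation of the $e^{-r}$ and $r^r$ factors and the combination of the square-root prefactors, but the underlying argument is identical.
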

\begin{proof}
For $r>0$ the functions $\Gamma(1+r)$, $\Gamma(1+r\alpha)$,
$\Gamma(1+r-r\alpha)$ are defined correctly and positive. By virtue
of (5), we have
\begin{eqnarray}
\binom{r}{r\alpha}=\frac{\Gamma(1+r)}{\Gamma(1+r\alpha)\,\Gamma(1+r-r\alpha)}\,.
\end{eqnarray}
For the gamma function, the following generalized Stirling formula
is valid (see, for example, [2]):
$$\Gamma(1+x)\sim \sqrt{2\pi x}\,\Bigl(\frac{x}{e}\Bigr)^{x}\mbox{
as } x\rightarrow \infty.$$ In view of the condition $0<\alpha <1$,
we have $r\alpha\rightarrow \infty$ and $r-r\alpha\rightarrow
\infty$ as $r$ tends to infinity. Now, using the generalized
Stirling formula, we obtain by equivalent transformations the
asymptotic equality (9) from (10) .
\end{proof}
\begin{corollary}
Let $r$ takes non-negative integer values, $\alpha\in {\mathbb R}$
does not depend on $r$ and $0<\alpha <1$. Then the asymptotic
equality {\rm (9)} is valid as $r$ tends to infinity.
\end{corollary}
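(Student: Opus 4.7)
The plan is to obtain the corollary as a direct restriction of Proposition~1 from the real parameter $r \in \mathbb{R}_{>0}$ to its integer subsequence $r = n \in \mathbb{N}$. Nothing new has to be proved: the asymptotic relation (9) is already established for all real $r \to \infty$ under the assumption $0 < \alpha < 1$, and any asymptotic equality $f(r) \sim g(r)$ that holds as the continuous parameter $r$ tends to infinity automatically holds along any subsequence $r_k \to \infty$, in particular along $r_k = k$ with $k \in \mathbb{N}$.

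Concretely, I would observe that for every non-negative integer $n$ the expression $\binom{n}{n\alpha}$ is defined via formula (5) exactly as in Proposition~1 (note that $n\alpha$ need not be an integer, but since $0 < \alpha < 1$ we have $n\alpha \in (-1, n+1)$, so the definition applies and the value is positive by Theorem~1(i)). Thus the quotient
\[
\frac{\binom{n}{n\alpha}}{\sqrt{\dfrac{1}{2\pi\alpha(1-\alpha)n}}\Bigl(\dfrac{1}{\alpha}\Bigr)^{\alpha n}\Bigl(\dfrac{1}{1-\alpha}\Bigr)^{(1-\alpha)n}}
\]
is the restriction to $r = n$ of the corresponding ratio of continuous functions of $r$, which by Proposition~1 tends to $1$ as $r \to \infty$. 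Hence it also tends to $1$ as $n \to \infty$ through non-negative integers, which is precisely the assertion of the corollary.

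There is no real obstacle here; the only point worth flagging in the write-up is the mild subtlety that $n\alpha$ is not assumed integral, so the binomial coefficient on the left of (9) is genuinely a real-argument binomial coefficient in the sense of (5), not a classical one. Once that is acknowledged, the proof is a one-line appeal to the preceding proposition.
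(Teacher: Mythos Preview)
Your proposal is correct and follows essentially the same approach as the paper: define the ratio of $\binom{r}{r\alpha}$ to the right-hand side of (9), invoke Proposition~1 to get that this ratio tends to $1$ as the real variable $r\to\infty$, and then pass to the integer subsequence. Your additional remark that $n\alpha$ need not be an integer (so the left-hand side is a genuine real-argument binomial coefficient) is a helpful clarification not made explicit in the paper.
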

\begin{proof}
Let $g(r)$ be the function on the right side of the asymptotic
equality (9) and $f(r)=\binom{r}{r\alpha}/g(r)$. Then
$\lim_{r\rightarrow\infty} f(r)=1$ by Proposition $1$. Non-negative
integers $n$ form an infinitesimal subsequence of the values of real
variable $r$. Therefore, the existing limit value of function $f(r)$
of the real argument as $r\rightarrow \infty$ is preserved for
function $f(n)$ of the non-negative integer argument as
$n\rightarrow \infty$.
\end{proof}

As noted in [3], in the case of binomial coefficients of the form
$\binom{n}{\alpha}$ when $n$ is a non-negative integer, the binomial
coefficient is explicitly expressed in terms of elementary
functions. The following proposition formalizes this statement and
its justification is based on the properties of the gamma function
and binomial coefficients of real arguments.
\renewcommand{\theproposition}{\arabic{proposition} {\rm[3]}}
\begin{proposition}
Let $n$ be a non-negative integer and real number $\alpha\in
(-1,n+1)$. Then the following equality is valid
\[
\binom{n}{\alpha}=\left\{
\begin{array}{ll}
\frac{\sin \pi\alpha}{\pi\alpha}\,, & \mbox{if } n=0 \mbox{ and }
\alpha\neq 0,\\[5pt]
\frac{n!}{(n-\alpha)(n-1-\alpha)\cdots (1-\alpha)}\, \frac{\sin
\pi\alpha}{\pi\alpha}\,, & \mbox{if } n\geq 1 \mbox{ and } \alpha\notin\{0,1,\ldots,n\},\\[5pt]
\frac{n!}{\alpha!(n-\alpha)!}\,, & \mbox{if } \alpha\in
\{0,1,\ldots,n\}.
\end{array} \right.
\]
\end{proposition}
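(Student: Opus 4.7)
The plan is to verify the three cases of the piecewise formula directly from the definition \eqref{5} and the functional identities for $\Gamma$ already recorded in the preliminaries.

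First, I would dispose of the easy cases. When $n=0$ and $\alpha\neq 0$, the claimed formula $\frac{\sin\pi\alpha}{\pi\alpha}$ is exactly the second branch of Theorem 1(ii), so there is nothing new to show. When $\alpha\in\{0,1,\ldots,n\}$, both $1+\alpha$ and $1+n-\alpha$ are positive integers, so applying (2) three times to (5) gives
$$\binom{n}{\alpha}=\frac{\Gamma(1+n)}{\Gamma(1+\alpha)\Gamma(1+n-\alpha)}=\frac{n!}{\alpha!\,(n-\alpha)!},$$
which matches the third branch.

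The main case is $n\ge 1$ with $\alpha\in(-1,n+1)\setminus\{0,1,\ldots,n\}$. Starting again from (5), I would iterate the reduction formula (3) on the denominator factor $\Gamma(1+n-\alpha)$, peeling off $n$ factors until reaching $\Gamma(1-\alpha)$:
$$\Gamma(1+n-\alpha)=(n-\alpha)(n-1-\alpha)\cdots(1-\alpha)\,\Gamma(1-\alpha).$$
This is legitimate because $\alpha\notin\{1,\ldots,n\}$ guarantees that none of the intermediate arguments of $\Gamma$ hit $0,-1,-2,\ldots$. Simultaneously, using (3) once on $\Gamma(1+\alpha)=\alpha\Gamma(\alpha)$, I obtain
$$\binom{n}{\alpha}=\frac{n!}{(n-\alpha)(n-1-\alpha)\cdots(1-\alpha)}\cdot\frac{1}{\alpha\,\Gamma(\alpha)\Gamma(1-\alpha)}.$$
Since $\alpha\in(-1,n+1)$ and $\alpha$ is not an integer, $\alpha\notin\{0,\pm1,\pm2,\ldots\}$, so the reflection formula (4) applies and yields $\alpha\,\Gamma(\alpha)\Gamma(1-\alpha)=\pi\alpha/\sin\pi\alpha$. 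Substituting gives the middle branch
$$\binom{n}{\alpha}=\frac{n!}{(n-\alpha)(n-1-\alpha)\cdots(1-\alpha)}\cdot\frac{\sin\pi\alpha}{\pi\alpha}.$$

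There is no genuine obstacle here; the only point requiring a moment of care is checking that all invocations of (3) and (4) stay within their stated domains, which is handled by the hypothesis $\alpha\in(-1,n+1)\setminus\{0,1,\ldots,n\}$. Everything else is a two-line manipulation of the identities already collected in Section 1.
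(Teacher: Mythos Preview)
Your proof is correct and follows essentially the same route as the paper's: both dispose of the cases $n=0$ and $\alpha\in\{0,\ldots,n\}$ via Theorem~1(ii) and (2), then handle the main case by iterating the reduction formula (3) on $\Gamma(1+n-\alpha)$ down to $\Gamma(1-\alpha)$ and invoking the reflection formula (4). The only cosmetic difference is that you spell out the step $\Gamma(1+\alpha)=\alpha\Gamma(\alpha)$ and the substitution of (4) explicitly, whereas the paper compresses these into the phrase ``from (3)--(5) we obtain the required expression.''
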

\renewcommand{\theproposition}{\arabic{proposition}}
\begin{proof}
The required equality is proved in Theorem $1$ for $n=0$, and for
$\alpha\in \{0,1,\ldots,n\}$ it follows from the property of the
gamma function (2). Let now $n\geq 1$ and
$\alpha\notin\{0,1,\ldots,n\}$. Suppose that $n-i-\alpha\in
\{0,-1,-2,\ldots\}$ for some $i\in {\mathbb N}$ and $0\leq i \leq
n-1$. Then $\alpha-(n-i)\in \{0,1,2,\ldots\}$. Hence, $\alpha\in
{\mathbb N}$ and therefore $\alpha\in \{0,1,\ldots,n\}$, got a
contradiction. Thus, $n-i-\alpha\notin \{0,-1,-2,\ldots\}$ for every
$i=0,1,\ldots,n-1$. By virtue of the reduction formula (3), we have
$$\Gamma(1+n-\alpha)=(n-\alpha)\Gamma(n-\alpha)=\ldots
=\prod_{i=0}^{n-1}(n-i-\alpha)\Gamma(1-\alpha).$$
Since $\alpha\in {\mathbb R}\setminus \{0,\pm1,\pm2,\ldots\}$, from
(3)--(5) we obtain the required expression for the binomial
coefficient $\binom{n}{\alpha}$.
\end{proof}

\bigskip


\begin{thebibliography}{6}

\bibitem{FTI22-2}
T.I.~Fedoryaeva, {\it Logarithmic asymptotic of the number of
central vertices of almost all $n$-vertex graphs of diameter $k$},
Siber. Electr. Math. Reports, to appear.
\bibitem{FGM}
G.M.~Fikhtengol'ts,  {\it Course of Differential and Integral
Calculus Volume 2},  Fizmatlit, Moscow, 2003. ISBN 5-9221-0157-9
\bibitem{FD}
D.~Fowler, {\it The Binomial Coefficient Function}, The American
Mathematical Monthly, Vol. {\bf 103}, No. 1 (Jan., 1996), pp. 1--17.
DOI.org/10.2307/2975209 Zbl 0857.05003
\bibitem{GRL}
R.L.~Graham, D.E.~Knuth, and O.~Patashnik, {\it Concrete
Mathematics}, Addison-Wesley, 1994. Zbl 0836.00001
\bibitem{Jensen}
J.L.W.V.~Jensen and T.H.~Gronwall, {\it An Elementary Exposition of
the Theory of the Gamma Function}, Annals of Mathematics, Mar.,
1916, Second Series, Vol. {\bf 17}, No. 3 (Mar., 1916), pp.
124--166. DOI.org/10.2307/2007272 Zbl 46.0563.02
\bibitem{Smith}
S.T.~Smith, {\it The binomial coefficient $\binom{n}{x}$ for
arbitrary $x$}, Online Journal of Analytic Combinatorics, December
2020. https://hosted.math.rochester.edu/ojac/vol15/176.pdf Zbl
1468.11069
\end{thebibliography}
\end{document}